\newcommand{\N}{\ensuremath{\mathbb{N}}}
\newcommand{\PP}{\ensuremath{\mathbb{P}}}
\newcommand{\E}{\ensuremath{\mathbb{E}}}
\newtheorem{lemma}{Lemma}
\newtheorem{condition}{Condition}
\newtheorem{theorem}{Theorem}
\title{On the expected runtime of multiple testing algorithms with bounded error}
\author{Georg Hahn}
\date{}
\begin{document}
\maketitle

\begin{abstract}
	Consider testing multiple hypotheses in the setting where the p-values of all hypotheses are unknown and thus have to be approximated using Monte Carlo simulations. One class of algorithms published in the literature for this scenario provides guarantees on the correctness of their testing result through the computation of confidence statements on all approximated p-values. This article focuses on the expected runtime of such algorithms and derives a variety of finite and infinite expected runtime results.
\end{abstract}

\textit{Keywords:}
algorithm; bounded error; computational effort; finite expected runtime; multiple testing.

\section{Introduction}
\label{sec:Introduction}
Consider the testing of $m \in \N$ hypotheses $H_{01},\ldots,H_{0m}$ in a scenario in which the p-values $p_1,\ldots,p_m$ corresponding to the $m$ hypotheses are unknown and thus have to be approximated using Monte Carlo simulations, for instance through bootstrap or permutation tests. Several algorithms published in the literature are designed for this scenario, either using a truncation rule to reach fast decisions \citep{BesagClifford1991, DavidsonMacKinnon2000, AndrewsBuchinsky2000, AndrewsBuchinsky2003, Wieringen2008, Sandve2011, SilvaAssuncao2013, SilvaAssuncao2018} or using a heuristic approach to minimize the computational effort without truncation \citep{Lin2005, Silva2009, GandyHahn2017}.

In the aforementioned Monte Carlo scenario, the main focus of this article lies on the expected runtime of methods which aim to provide a guarantee of correctness on their decision (rejection or non-rejection) for each hypothesis through the computation of a sequence of confidence intervals on each p-value. For the special case of a single hypothesis, it is known that algorithms which compute (or rely on) the decision of one hypothesis with respect to a fixed threshold have an infinite expected runtime. This result is restated below in Section~\ref{sec:single}.

The novelty of this article consists in the generalisation of expected runtime results from single to multiple hypotheses. To be precise, under a simple and weak asymptotic condition on the length of the intervals produced by the confidence sequence, the article shows the following three main results. For applications relying on independent testing, meaning decisions of multiple hypotheses tested at a constant (Bonferroni-type) threshold, all but two hypotheses can be decided in finite expected runtime (Section~\ref{sec:finite}). This result does not extend to applications which require full knowledge of all individual decisions, for instance step-up or step-down procedures, in which case no algorithm can guarantee even a single decision in finite expected runtime (Section~\ref{sec:multiple}). Simulations included in the conclusions (Section~\ref{sec:conclusions}) show that in practice, however, the number of pending decisions is typically low.

Although unconsidered in their original publications, the expected runtimes derived in this article apply to a whole class of published algorithms in the literature. For instance, they apply to the algorithms of \cite{GuoPedadda2008, GandyHahn2014} which provide a guarantee on the correctness of the decision on each hypothesis through the computation of \cite{ClopperPearson1934} confidence intervals. Expected runtimes also hold true for the confidence sequences of \cite{Robbins1970, Lai1976} employed in the methods of \cite{GandyHahn2016, Ding2018}, as they do for the confidence sequences of \cite{DarlingRobbins1967b, DarlingRobbins1967a} and the binomial confidence intervals of \cite{Armitage1958} employed in \cite{Fay2007,Gandy2009}. The results of this article do not apply to the \textit{push-out design} of \cite{FayFollmann2002} and the \textit{B-value design} of \cite{Kim2010}, which both achieve a bounded resampling risk without confidence statements on the p-values.

\subsection{A single decision requires infinite expected runtime}
\label{sec:single}
Following an argument similar to \cite[Section~3.1]{Gandy2009}, computing the decision of a hypothesis with random p-value requires an infinite expected runtime. Let $m=1$ hypothesis $H_{01}$ be given having a random p-value $p_1$. Assume a sequential algorithm $A$ tests $H_{01}$ at some given threshold $\alpha$ by approximating $p_1$ through the drawing of Monte Carlo simulations and gives a guarantee of $1-\epsilon$ on the correctness of its decision, where $\epsilon \in (0,1)$.

Computing a decision on $H_{01}$ is equivalent to deciding whether $p_1 \leq \alpha$ or $p_1 > \alpha$. For some $p_0>\alpha$, consider testing $H_0: p_1 \leq \alpha$ against $H_1: p_1=p_0$. A test can be constructed by rejecting $H_0$ if and only if $A$ does not reject $H_{01}$. Due to the guarantee of algorithm $A$, both the type 1 and type 2 errors of this test are $\epsilon$. For such a sequential test, a lower bound on the runtime $\tau$ (the expected number of steps) is given by \cite[equation (4.81)]{Wald1945} as
\begin{align}
	\E(\tau|p=p_1) \geq \frac{\epsilon \log \left( \frac{\epsilon}{1-\epsilon} \right) + (1-\epsilon) \log \left( \frac{1-\epsilon}{\epsilon} \right)} {p_1 \log \left( \frac{p_1}{\alpha} \right) + (1-p_1) \log \left( \frac{1-p_1}{1-\alpha} \right)}.
	\label{eq:wald}
\end{align}
The same bound \eqref{eq:wald} can be derived for the case $p_0 \leq \alpha$. Abbreviate the numerator of \eqref{eq:wald} by $C$ and consider $p_1$ in a Bayesian setup such that the following condition is satisfied.
\begin{condition}
	\label{condition:p}
	Assume $p_1$ has a distribution function $F(p_1)$ with derivative $F'(\alpha)>0$, and that for a suitable $\gamma>0$, there exists a constant $d>0$ such that $F'(p_1) \geq d$ in $(\alpha,\alpha+\gamma)$.
\end{condition}
Amongst others, Condition~\ref{condition:p} is satisfied for the distributions of the exponential family. Under Condition~\ref{condition:p}, $\E(\tau)$ can be bounded by
\begin{align*}
	\E(\tau) &= \int_0^1 \E(\tau|p=p_1) dF(p_1) \geq \int_{\alpha}^{\alpha+\gamma} \E(\tau|p=p_1) dF(p_1)\\
	&\geq C \cdot d \cdot \int_{\alpha}^{\alpha+\gamma} \left( p \log \left( \frac{p}{\alpha} \right) + (1-p) \log \left( \frac{1-p}{1-\alpha} \right) \right)^{-1} dp = \infty,
\end{align*}
as the integrand is proportional to $(p-\alpha)^{-2}$ as $p \rightarrow \alpha$. This proves an infinite expected runtime for the sequential test of $H_0$ and, equivalently, for algorithm $A$.

\section{Bonferroni-type multiple testing in expected finite time}
\label{sec:finite}
Assume the testing of $H_{01},\ldots,H_{0m}$ is carried out by comparing each $p_i$ to a threshold value $\alpha_i \in (0,1)$, $i \in \{ 1,\ldots,m \}$, as done in, for instance, step-up and step-down procedures \citep{GandyHahn2016}. For this assume $\alpha_1 < \cdots < \alpha_m$. In applications which rely on multiple testing at a constant (Bonferroni-type) threshold with a guarantee of correctness through confidence statements on the p-values, it will be shown that decisions on all but two hypotheses can be computed in expected finite time. The guarantee of correctness on all decisions is assumed to hold simultaneously for all hypotheses at $1-\epsilon$ for some pre-specified $\epsilon \in (0,1)$.

To be precise, a stronger statement is proven. For all but two hypotheses, it can be decided in expected finite time which of the intervals
\begin{align}
	{\cal I} = \{ [\alpha_i,\alpha_{i+1}): \alpha_i<\alpha_{i+1}, i=1,\ldots,m-1\} \cup \{ [0,\alpha_1), [\alpha_m,1] \}
	\label{eq:I}
\end{align}
their p-values fall into.

As p-values are unknown, they are approximated through Monte Carlo simulations, and confidence statements are provided via confidence intervals. Let $g(n)$ be the length of the confidence interval for a p-value after drawing $n$ Monte Carlo simulations, and $\hat{p}_n$ be the maximum likelihood estimate of $p$ based on $n$ Monte Carlo simulations. Alternatively, any other estimate $\hat{p}_n$ is permissible whose deviation from the mean can be bounded with a Hoeffding type inequality \citep{Hoeffding1963} (see the proof of Theorem~1 in the Supplementary Material).
\begin{condition}
	\label{condition:ci}
	Any confidence interval for $p$ contains $\hat{p}_n$. Moreover, $g(n) \in o(n^\gamma)$ for some $-\frac{1}{2} < \gamma < -\frac{1}{3}$.
\end{condition}
Define $D = \min_{I \in {\cal I}} d(p,\partial I)$ for a (random) $p$, where $d(p,\partial I)$ is the distance of $p$ to the boundary of $I$. If the distance of $p$ to $\hat{p}_n$ is less than $D/2$ and the confidence interval for $p$ has length $g(n) \leq D/2$, the confidence interval for $p$ will be entirely contained in some $I \in {\cal I}$ (assuming it contains $\hat{p}_n$ required by Condition~\ref{condition:ci}). Therefore, a decision on which interval $I \in {\cal I}$ contains $p$ is obtained on reaching the stopping time $\tau = \inf \{ n: |\hat{p}_n-p|<D/2, g(n)<D/2 \}$.

Let $\tau_1,\ldots,\tau_m$ be the stopping times of the $m$ p-values and $\tau_{(1)} \leq \cdots \leq \tau_{(m)}$ be their order statistic.

\begin{theorem}
	\label{thm:finite_expectation}
	Let the density of $p$ be bounded above by some finite constant. Assume $m \geq 3$. Under Condition~\ref{condition:ci}, $\E(\tau_{(m-s)}) < \infty$ for $2 \leq s < m$.
\end{theorem}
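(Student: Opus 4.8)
The plan is to reduce to the single case $s=2$: since $\tau_{(m-s)}\le\tau_{(m-2)}$ for every $s\ge2$, it suffices to prove $\E(\tau_{(m-2)})<\infty$. I would base the argument on three ingredients. First, a per-hypothesis tail bound controlling $\PP(\tau_i\ge t\mid p_i)$ in terms of the random distance $D_i$, obtained by combining Condition~\ref{condition:ci} (which says when $g(n)<D_i/2$) with a Hoeffding-type inequality (which says when $|\hat p_{i,n}-p_i|<D_i/2$). Second, the elementary fact that $\{\tau_{(m-2)}\ge t\}$ forces at least three of the $\tau_i$ to exceed $t$, so a union bound over the $\binom{m}{3}$ triples applies. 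Third, the quantitative consequence of the joint density of $p=(p_1,\dots,p_m)$ being bounded by some $M<\infty$: the probability that three prescribed p-values all lie within $\delta$ of the finite endpoint set $\{0,\alpha_1,\dots,\alpha_m,1\}$ of the intervals in ${\cal I}$ is $O(\delta^3)$, since that set of configurations has Lebesgue measure $O(\delta^3)$.

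For the first ingredient, I would note that $g(n)\in o(n^\gamma)$ with $\gamma<0$ implies $g(n)\to0$, so $n_0(d):=\inf\{n:g(k)<d/2\text{ for all }k\ge n\}$ is finite and non-increasing in $d$, and $g(n)\le n^\gamma$ for large $n$ gives $n_0(d)=O(d^{-1/|\gamma|})$ as $d\to0$. Once $n\ge n_0(D_i)$ the confidence-interval condition is automatic, so $\{\tau_i>n\}$ entails $|\hat p_{i,n}-p_i|\ge D_i/2$, which has conditional probability at most $2\exp(-c\,n\,D_i^2)$ given $p_i$ by Hoeffding's inequality for the empirical mean ($c=\tfrac12$ for the maximum likelihood estimate; for a general estimate the assumed Hoeffding-type bound gives the same form, the $O(1/n)$ bias being harmless since $n\gtrsim D_i^{-2}$ in the relevant range). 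Hence, when $D_i=d$, one gets $\PP(\tau_i\ge t\mid p_i)\le2\exp(-c\,t\,d^2)$ for all $t\ge n_0(d)$; in particular $\tau_i<\infty$ almost surely, since $D_i>0$ a.s. under the density bound.

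Then I would fix $t$, set $\delta(t)=c_0\,t^{-|\gamma|}$ with $c_0$ chosen so that $n_0(\delta(t))\le t/2$ for all large $t$ (possible because $n_0(\delta)=O(\delta^{-1/|\gamma|})$), and split $\{\tau_{(m-2)}\ge t\}$. Either at least three of the $D_i$ are $<\delta(t)$, whose probability is at most $\binom{m}{3}M\bigl(2(m+2)\delta(t)\bigr)^3=O(t^{-3|\gamma|})$ by the third ingredient; or at most two are, so by the pigeonhole principle some index $\ell$ has $D_\ell\ge\delta(t)$ and $\tau_\ell\ge t$, and since then $t\ge n_0(\delta(t))\ge n_0(D_\ell)$ the per-hypothesis bound and a union bound give probability at most $2m\exp(-c\,t\,\delta(t)^2)=2m\exp(-c\,c_0^2\,t^{1-2|\gamma|})$. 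Summing over $t\ge1$, the second contribution is finite because $1-2|\gamma|>0$, i.e. $\gamma>-\tfrac12$, and the first equals $\sum_t t^{-3|\gamma|}<\infty$ because $3|\gamma|>1$, i.e. $\gamma<-\tfrac13$; thus $\E(\tau_{(m-2)})=\sum_{t\ge1}\PP(\tau_{(m-2)}\ge t)<\infty$. The identical computation with $D_{(2)}$ in place of $D_{(3)}$ would need $2|\gamma|>1$, which fails — this is exactly why two hypotheses must be excluded and why Condition~\ref{condition:ci} confines $\gamma$ to $(-\tfrac12,-\tfrac13)$.

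The main obstacle I anticipate is the first ingredient. One must argue carefully that beyond the random time $n_0(D_i)$ the stopping rule genuinely collapses to a single deviation event for $\hat p_{i,n}$, handle the non-monotonicity of $n\mapsto|\hat p_{i,n}-p_i|$ so that $\{\tau_i\ge t\}$ can be controlled through the one event at $n=t-1$ rather than an intersection over all $n$, and reconcile the two competing scales — $n_0(D_i)\asymp D_i^{-1/|\gamma|}$ coming from the interval length versus the Hoeffding scale $D_i^{-2}$ — which is precisely what forces the bookkeeping $n_0(\delta(t))\le t/2$ used to balance the two cases above. Everything else is a routine union bound and a geometric and $p$-series summation.
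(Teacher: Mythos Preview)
Your proposal is correct and shares the paper's core mechanism: split according to whether the random distance $D$ falls below or above a threshold of order $t^{\gamma}$, control the small-$D$ contribution via the bounded-density assumption and the large-$D$ contribution via Hoeffding's inequality, and exploit that at least three ``bad'' events must coincide for $\tau_{(m-2)}$ to exceed $t$. The packaging differs. The paper first establishes a \emph{marginal} tail bound $\PP(\tau>t)=O(t^{\gamma})$ for a single stopping time (by conditioning on $\{D\le t^{\gamma}\}$ versus $\{D>t^{\gamma}\}$), and then feeds this into the closed-form i.i.d.\ order-statistic distribution $1-F_{\tau_{(m-s)}}(t)=\sum_{i=0}^{m-s-1}\binom{m}{i}F_\tau^i(t)(1-F_\tau(t))^{m-i}$, which yields integrals $\int\PP(\tau>t)^{m-i}\,dt$ with $m-i\ge 3$, convergent because $3\gamma<-1$. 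You instead reduce to $s=2$ up front and decompose $\{\tau_{(m-2)}\ge t\}$ directly into ``at least three $D_i$ are small'' versus ``some $\tau_\ell$ is large while $D_\ell$ is not small'', handling each piece separately. Your route is a little more elementary (no appeal to the order-statistic formula) and does not require the $\tau_i$ to be i.i.d., only that the \emph{joint} density of the $p_i$ be bounded; the paper's route handles all $s\ge 2$ simultaneously and requires only a bound on the \emph{marginal} density together with independence of the stopping times. One minor remark: the obstacle you anticipate about non-monotonicity of $n\mapsto|\hat p_{i,n}-p_i|$ is not an obstacle, since $\{\tau_i\ge t\}$ is already contained in the event that the stopping condition fails at $n=t-1$, so a single Hoeffding bound at that time suffices; the paper does exactly this.
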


The proof of Theorem~\ref{thm:finite_expectation} is found in the Supplementary Material. For the \cite{Bonferroni1936} correction, determining a $I \in {\cal I}$ containing the confidence interval of each hypothesis implies determining if the p-value of a hypothesis is above or below the constant testing threshold (subject to the overall $1-\epsilon$ error probability), thus giving a decision on all but two hypotheses in finite expected time by Theorem~\ref{thm:finite_expectation}. This result does not extend to multiple testing applications which depend on all individual decisions, for instance step-up or step-down procedures, in which case no algorithm can guarantee even a single decision in finite expected time (Section~\ref{sec:multiple}).

Section~1 of the Supplementary Material contains two results which highlight the asymptotic length of popular confidence intervals. These results are now used to establish that Condition~\ref{condition:ci} is satisfied for a variety of confidence sequences and the algorithms they use:
\begin{enumerate}
	\item The length of \cite{ClopperPearson1934} confidence intervals is $g(n) \propto n^{-1/2} (-\log \rho_n)^{1/2}$ (see the proof in the Supplementary Material), where $\rho_n$ is a sequence controlling how the overall risk $\epsilon$ is spent. If $-\log(\rho_n) \propto \log(n)$ as in \cite{Gandy2009,GandyHahn2014}, then $g(n) \in o(n^\gamma)$ for any $-\frac{1}{2} < \gamma < -\frac{1}{3}$. Since the \cite{ClopperPearson1934} intervals contain $\hat{p}_n$, Condition~\ref{condition:ci} is satisfied. The intervals are employed in the algorithms of \cite{GuoPedadda2008, GandyHahn2014}.
	\item Confidence intervals produced by the binomial confidence sequences of \cite{Robbins1970, Lai1976} satisfy $g(n) \propto n^{-1/2} \{ \log(n \log n) \}^{1/2}$ (see the proof in the Supplementary Material). In \cite[Section~3(A)]{Lai1976} it is shown that the intervals contain $\hat{p}_n$, thus satisfying Condition~\ref{condition:ci}. These confidence sequences are employed in the methods of \cite{GandyHahn2016, Ding2018}.
	\item Intervals produced by the confidence sequences of \cite{DarlingRobbins1967b, DarlingRobbins1967a} have length $g(n) \propto n^{-1/2} \{ \log (\log n) \}^{1/2}$ \cite[Section~1]{DarlingRobbins1967b} and are centered around the empirical mean, thus satisfying Condition~\ref{condition:ci}.
	\item Binomial confidence intervals of \cite{Armitage1958} are employed in the methods of \cite{Fay2007,Gandy2009}. Being binomial exact intervals as the ones of \cite{ClopperPearson1934}, they satisfy Condition~\ref{condition:ci}.
	\item Condition~\ref{condition:ci} is satisfied for (asymptotic) maximum likelihood confidence intervals, given those are computed with a normal approximation. To be precise, the normal approximation produces confidence intervals of length $g(n) \propto n^{-1/2}$, thus satisfying Condition~\ref{condition:ci}.
	\item Bootstrap confidence intervals for $p$ can be written as $[\hat{p}+q_{\alpha},\hat{p}+q_{1-\alpha}]$ with $\alpha \in (0,1)$, where $q_{\alpha}$ and $q_{1-\alpha}$ are the empirical $\alpha$ and $1-\alpha$ quantiles of some bootstrap distribution $F^\ast$ approximating an underlying distribution $F$ that $p$ is drawn from. Although specific to the application under consideration, the results of Theorem~\ref{thm:finite_expectation} apply given it has been verified that $q_{1-\alpha}-q_{\alpha} \in o(n^\gamma)$ for some $-\frac{1}{2} < \gamma < -\frac{1}{3}$. In particular, this is true if a normal approximation is used as in the case of maximum likelihood confidence intervals.
\end{enumerate}

\section{Extension to infinite expected runtime for multiple testing}
\label{sec:multiple}
For multiple testing applications having the property that with non-zero probability, no decision on any hypothesis can be made until it is known whether a certain hypothesis is rejected or non-rejected (as it is the case, for instance, for step-up and step-down procedures), under reasonable assumptions on the distribution of p-values, an algorithm with both a guarantee of correctness and a finite expected runtime for any number of decisions cannot exist.

Consider testing the $m$ hypotheses $H_{01},\ldots,H_{0m}$ satisfying the following condition.
\begin{condition}
	\label{condition:multiple}
	The p-values $p=(p_1,\ldots,p_m)$ corresponding to $H_{01},\ldots,H_{0m}$ have a joint distribution with support $[0,1]^m$ and multivariate density $f_p(p)$. For all $\delta>0$ there exists a $\kappa>0$ such that $f_p(p)>\kappa$ on $[\delta,1-\delta]^m$.
\end{condition}
This condition is satisfied for many common densities. Assume testing is carried out with a step-up procedure that compares each $p_i$ to a threshold value $\alpha_i$, $i \in \{1,\ldots,m\}$, where $\alpha_1 < \cdots < \alpha_m$ (cf.\ Section~\ref{sec:finite}). For any $0<\eta<(\alpha_m-\alpha_{m-1})/4$, define $A = \left[ \alpha_{m-1} + 2\eta, \alpha_m - 2\eta \right]^{m-1} \times [\alpha_m-\eta,\alpha_m+\eta]$. Let $B=[\delta_B,1-\delta_B]^m$ and choose $\delta_B \in (0,1)$ in such a way that $A \cap B \neq \emptyset$.

Assuming the distribution of $p=(p_1,\ldots,p_m)$ satisfies Condition~\ref{condition:multiple} with $\delta=\delta_B$, draw a vector $\tilde{p}=(\tilde{p}_1,\ldots,\tilde{p}_m)$ from $f_p$ conditional on being in $A$. By definition of $A$, $\tilde{p}_m$ is the largest value in $\tilde{p}$ and will thus be compared to the threshold value $\alpha_m$. By properties of a step-up procedure, as $\alpha_{m-1} < \tilde{p}_i < \tilde{p}_m$ for all $i \neq m$, no decision on any hypothesis can be made unless it is known whether $\tilde{p}_m$ lies below or above $\alpha_m$. In the former case, all hypotheses are rejected, in the latter case, all hypotheses are non-rejected. Under the additional assumption that the marginal distribution of $p_m$ satisfies Condition~\ref{condition:p}, by Section~\ref{sec:single}, deciding $H_{0m}$ requires an infinite expected time. Thus on the event $\{p \in A\}$, the expected time to decide any number of hypotheses is also infinite, $\E(\tau|p \in A) = \infty$, where $\tau$ denotes the number of Monte Carlo simulations.

By the law of total expectation, the unconditional expected time can be bounded as $\E(\tau) \geq \E(\tau|p \in A) \cdot \PP(p \in A) = \infty$, where it was used that $\E(\tau|p \in A) = \infty$ and that $f_p(p)>\kappa>0$ on $B$ implies $\PP(p \in A) \geq \PP(p \in A \cap B) > 0$.

The above consideration proves an infinite expected runtime for step-up procedures for any desired number of decisions. It holds true, for instance, for the step-up procedures of \cite{Simes1986,Hochberg1988,Rom1990,BenjaminiHochberg1995,BenjaminiYekutieli2001}. A similar construction proves the same result for step-down procedures \citep{Sidak1967,Holm1979,Shaffer1986}. Extensions to other testing applications in which obtaining any decision can be made dependent on the decision of a single hypothesis are possible but application-specific.

\section{Conclusions}
\label{sec:conclusions}
\begin{figure}
	\centering
	\includegraphics[width=0.5\textwidth]{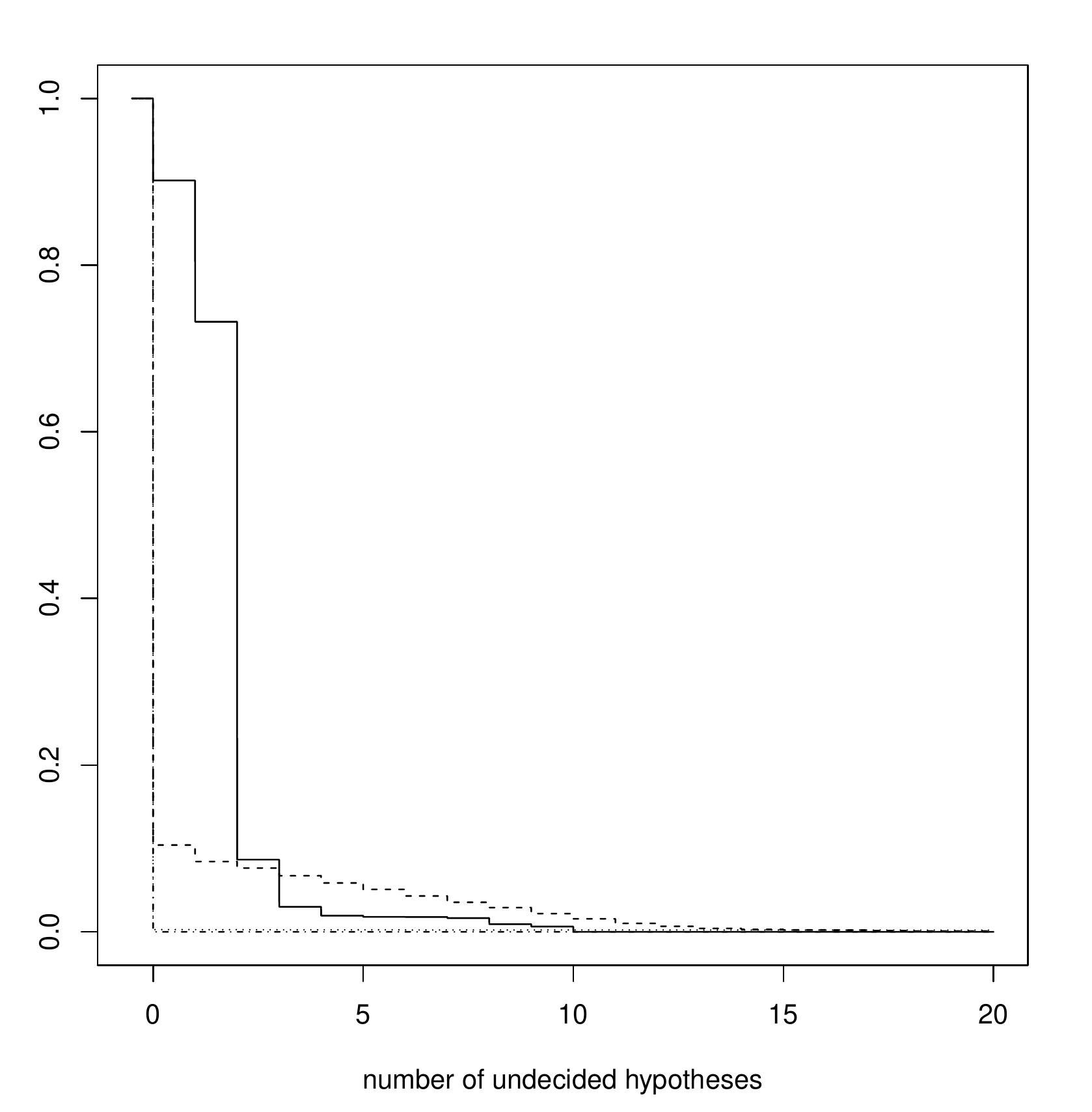}
	\caption{Survival function of the number of undecided hypotheses tested with the \cite{BenjaminiHochberg1995} procedure with threshold $\alpha=0.1$. $R=10^4$ repetitions. Number of hypotheses $m$ from $10^1$ (solid), $10^2$ (dashed), $10^3$ (dotted), $10^4$ (dash-dotted).\label{fig:survival}}
\end{figure}
According to Theorem~\ref{thm:finite_expectation}, it can be decided in expected finite time for all but two hypotheses which interval $I \in {\cal I}$ in \eqref{eq:I} their p-values are contained in. For independent (Bonferroni-type) testing, this implies at most two pending decisions in finite expected time.

For step-up and step-down procedures, the situation is more involved. This is because the individual decision on a hypothesis need not matter in step-up (step-down) procedures so long as there exists another hypothesis with a larger (smaller) p-value for which a decision is available. As a consequence, having two undecided hypotheses in the sense of Theorem~\ref{thm:finite_expectation} is not always representative of the actual number of decisions. Although in Section~\ref{sec:multiple}, this fact was used to show that under conditions, the expected time to compute any decision is infinite, in practice the decisions on a large number of hypotheses can often be computed quickly. For the \cite{BenjaminiHochberg1995} procedure with threshold $\alpha=0.1$, Fig.~\ref{fig:survival} displays the survival function of the number of undecided hypotheses which remain if, out of $m \in \{ 10^1,10^2,10^3,10^4 \}$ hypotheses, for only $m-2$ hypotheses it can be decided which $I \in {\cal I}$ in \eqref{eq:I} their p-values are contained in. The figure is based on $R=1000$ repetitions using p-values generated from the mixture distribution of \cite{Sandve2011}, consisting of a proportion $\pi_0=0.8$ drawn from a uniform distribution in $[0,1]$ and the remaining proportion $1-\pi_0$ drawn from a beta$(0.5,25)$ distribution. As can be seen, often only a handful of hypotheses remain without decision. On this note, a related question addresses the optimal strategy (where optimality is defined with respect to the expected number of erroneously classified hypotheses) for allocating Monte Carlo simulations to multiple hypotheses in order to maximize the accuracy of the testing result. This has been addressed in the literature \citep{HahnOptimal}.

\appendix
\section{Appendix}
In the following, two versions of Hoeffding's inequality \citep{Hoeffding1963} are used in several places. Specifically, let $X_1,\ldots,X_n$ be independent random variables which are almost surely bounded, that is $\PP(X_i \in [a_i,b_i])=1$ for some constants $a_i, b_i$ and all $i \in \{1,\ldots,n\}$. Then by \cite[Theorem~2]{Hoeffding1963} the empirical mean $\bar{X}=(X_1+\cdots+X_n)/n$ satisfies
\begin{align*}
	\PP \left( \bar{X}-\E(\bar{X}) \leq \delta \right) &\leq \exp \left( - \frac{2 \delta^2 n^2}{\sum_{i=1}^n (b_i-a_i)^2} \right),\\
	\PP \left( \left| \bar{X}-\E(\bar{X}) \right| \geq \delta \right) &\leq 2 \exp \left( - \frac{2 \delta^2 n^2}{\sum_{i=1}^n (b_i-a_i)^2} \right).
\end{align*}
In all following proofs, both inequalities will always be applied to a sum of independent Bernoulli random variables, in which case $b_i=1$ and $a_i=0$ for all $i \in \{1,\ldots,n\}$, and the denominator of the exponentials above can be simplified to $\sum_{i=1}^n (b_i-a_i)^2=n$.

\subsection{Auxiliary lemmas}
\label{sec:lemmas}

\begin{lemma}
\label{lemma:cp}
The two-sided \cite{ClopperPearson1934} confidence interval $I_n$ with coverage probability $1-\rho_n$ based on $n$ Monte Carlo simulations has length $|I_n| \leq 2 (2n)^{-1/2} (-\log \rho_n)^{1/2}$.
\end{lemma}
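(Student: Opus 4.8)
The plan is to bound the length of the Clopper--Pearson interval by comparing it to a Hoeffding-type confidence interval for the binomial proportion. Recall that the Clopper--Pearson interval $I_n=[L_n,U_n]$ at level $1-\rho_n$ is the exact interval obtained by inverting the binomial tail probabilities: given $S_n$ successes out of $n$ trials, $L_n$ is the smallest $p$ such that $\PP_p(\mathrm{Bin}(n,p)\geq S_n)\geq \rho_n/2$ and $U_n$ is the largest $p$ such that $\PP_p(\mathrm{Bin}(n,p)\leq S_n)\geq \rho_n/2$. By construction it is the shortest exact interval built from these one-sided tests, so in particular any other valid two-sided confidence interval based on the same tail bounds is at least as wide.

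The key step is to produce an explicit competitor interval using Hoeffding's inequality. Writing $\hat p_n=S_n/n$, the one-sided Hoeffding bound from the Appendix gives, with $b_i-a_i=1$ and $\sum (b_i-a_i)^2=n$,
\begin{align*}
\PP_p\bigl(\hat p_n-p\leq -\delta\bigr)\leq \exp(-2\delta^2 n),\qquad \PP_p\bigl(\hat p_n-p\geq \delta\bigr)\leq \exp(-2\delta^2 n).
\end{align*}
Setting each of these exponential tails equal to $\rho_n/2$ yields $\delta=\delta_n:=\sqrt{\tfrac{1}{2n}\log(2/\rho_n)}$, so that $[\hat p_n-\delta_n,\hat p_n+\delta_n]$ is a valid two-sided confidence interval for $p$ with coverage at least $1-\rho_n$, of length $2\delta_n=\sqrt{\tfrac{2}{n}\log(2/\rho_n)}$. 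Because each endpoint of this Hoeffding interval arises from the same kind of one-sided tail event that defines the Clopper--Pearson endpoints, and because the binomial tails are at most the Hoeffding bounds, one has $L_n\geq \hat p_n-\delta_n$ and $U_n\leq \hat p_n+\delta_n$ at the realised value $S_n$; hence $|I_n|=U_n-L_n\leq 2\delta_n$. Finally, $\log(2/\rho_n)=\log 2-\log\rho_n\leq -2\log\rho_n$ whenever $\rho_n$ is small enough that $-\log\rho_n\geq\log 2$, i.e. $\rho_n\leq 1/2$, so $2\delta_n\leq \sqrt{\tfrac{2}{n}\cdot(-2\log\rho_n)}=2(2n)^{-1/2}(-\log\rho_n)^{1/2}$, which is the claimed bound.

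The main obstacle I anticipate is making the endpoint comparison $L_n\geq \hat p_n-\delta_n$ and $U_n\leq \hat p_n+\delta_n$ fully rigorous rather than merely plausible. The clean way is to argue directly at the level of tail probabilities: for the upper endpoint, at $p=\hat p_n+\delta_n$ the Hoeffding bound gives $\PP_p(\mathrm{Bin}(n,p)\leq S_n)\leq \exp(-2\delta_n^2 n)=\rho_n/2$, and since this probability is nonincreasing in $p$, the Clopper--Pearson upper endpoint $U_n$ — defined as the largest $p$ with this probability at least $\rho_n/2$ — cannot exceed $\hat p_n+\delta_n$; the lower endpoint is handled symmetrically. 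One should also dispose of the boundary cases $S_n=0$ and $S_n=n$ (where one Clopper--Pearson endpoint is $0$ or $1$) separately, though the inequality only becomes easier there. Everything else is the routine simplification of the logarithm under the mild assumption $\rho_n\leq 1/2$, which holds for all sequences $\rho_n$ of practical interest.
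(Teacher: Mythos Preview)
Your approach is essentially the paper's: bound each Clopper--Pearson endpoint by showing, via Hoeffding's inequality, that beyond $\hat p_n\pm\xi$ the relevant binomial tail already falls below the defining level, so the CP endpoint cannot lie further out. Two small points are worth flagging. First, your opening claim that the Clopper--Pearson interval is ``the shortest exact interval built from these one-sided tests'' is false and not needed; the real argument is the direct tail-probability comparison you spell out later (and which the paper uses from the start), so drop the shortest-interval heuristic and lead with the monotonicity-of-tails argument. Second, the paper sets each one-sided tail equal to $\rho_n$ rather than $\rho_n/2$, which is why it obtains $\xi=(2n)^{-1/2}(-\log\rho_n)^{1/2}$ on the nose without your extra step $\log(2/\rho_n)\le -2\log\rho_n$ and the attendant restriction $\rho_n\le 1/2$. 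Your convention is the more standard one for a two-sided interval with nominal coverage $1-\rho_n$, and the discrepancy is immaterial for the asymptotic use made of the lemma; but if you want the stated constant exactly, adopt the paper's per-tail level.
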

\begin{proof}
Suppose $S<n$ exceedances are observed among $n$ Monte Carlo simulations. Let $\xi=(2n)^{-1/2} (-\log \rho_n)^{1/2}$ and regard the following probabilities conditional on $S$ and $n$. The upper limit $p_u$ of the interval $I_n$ is the solution to $\PP(X \leq S|p=p_u) = \rho_n$, where  $X \sim \text{Binomial}(n,p)$. If $p > S/n +\xi$, by Hoeffding's inequality,
$$\PP(X \leq S)
= \PP \left( \frac{X}{n} - \E \left( \frac{X}{n} \right)
\leq \frac{S}{n} - \E \left( \frac{X}{n} \right) \right)
\leq \exp \left( - \frac{2 (S/n-p)^2 n^2}{n} \right)
< \rho_n,$$
where the first equality was obtained by dividing by $n$ and subtracting $\E(X/n)=p$ on both sides inside the probability, and where the Binomial variable $X$ was expressed as $X=X_1+\cdots+X_n$ for some Bernoulli variables $X_i$, $i \in \{1,\ldots,n\}$. Thus $p_u \leq S/n + \xi$. If $S=n$ then $p_u=1$, implying $p_u \leq S/n + \xi$. Similarly, the lower limit $p_l$ of $I_n$ satisfies $p_l \geq S/n - \xi$. Together, $|I_n| = p_u-p_l \leq 2 \xi$.
\end{proof}

The proof of Lemma~\ref{lemma:cp} is analogous to the one of \cite[Lemma~2]{GandyHahn2014}.

\begin{lemma}
\label{lemma:lai}
The confidence interval $I_n$ produced by the \cite{Robbins1970} and \cite{Lai1976} Binomial confidence sequence based on $n$ Monte Carlo simulations has length $|I_n| \leq n^{-1/2} \{ \log(4n \log n) \}^{1/2}$ for large enough $n$.
\end{lemma}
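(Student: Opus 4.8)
The plan is to start from the explicit description of the confidence interval realising the \cite{Robbins1970} mixture construction for a binomial parameter, as recorded in \cite[Section~3(A)]{Lai1976}. After $S_n$ exceedances are observed in $n$ Monte Carlo simulations, that interval can be put in the form $I_n = \{ p \in [0,1] : n\,(\hat{p}_n-p)^2 \leq p(1-p)\, c_n \}$, where $\hat{p}_n = S_n/n$ and $c_n$ is an explicit, slowly growing sequence comprising a $\log(n\log n)$-type leading term together with lower-order contributions that depend on the pre-specified coverage level; if \cite{Lai1976} presents the region through the binomial relative entropy between $\hat{p}_n$ and $p$, a Pinsker-type bound brings it to exactly this shape with an adjusted $c_n$. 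First I would make $c_n$ explicit and record that, by construction, $\hat{p}_n \in I_n$ (this is the property already cited in the main text).

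The next step is to bound $p(1-p) \leq 1/4$ uniformly in $p$, which turns the defining inequality of $I_n$ into the fixed quadratic $(\hat{p}_n-p)^2 \leq c_n/(4n)$. Hence $I_n \subseteq [\hat{p}_n - \tfrac12(c_n/n)^{1/2},\, \hat{p}_n + \tfrac12(c_n/n)^{1/2}] \cap [0,1]$, so $|I_n| \leq (c_n/n)^{1/2} = n^{-1/2} c_n^{1/2}$; exactly as in the proof of Lemma~\ref{lemma:cp}, the boundary cases $S_n \in \{0,n\}$ (where one endpoint of $I_n$ equals $0$ or $1$) only shorten the interval and require no separate treatment. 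It then remains to verify $c_n \leq \log(4n\log n) = \log 4 + \log n + \log\log n$ for all $n$ beyond some threshold: writing out the $\log n$ and $\log\log n$ contributions of $c_n$ and noting that its constant and $O(1/n)$ corrections are eventually absorbed, this is an elementary limit comparison, and it yields $|I_n| \leq n^{-1/2}\{\log(4n\log n)\}^{1/2}$ for large enough $n$, as claimed.

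I expect the main obstacle to be the combination of the first and last steps: extracting the precise form of the Lai boundary $c_n$ and then checking that it is dominated by $\log(4n\log n)$ --- in particular that the coefficient of the leading $\log n$ term in $c_n$ does not exceed $1$ (with equality admissible only if the lower-order terms cooperate). Once $c_n$ is pinned down, what remains is routine: solving a quadratic, intersecting with $[0,1]$, and comparing logarithmic terms. An alternative that avoids quoting the closed form would be to re-derive the endpoint estimate directly, via a Hoeffding-type \citep{Hoeffding1963} peeling argument over a geometric grid of sample sizes in the spirit of Lemma~\ref{lemma:cp}, but invoking the published expression keeps the constants transparent and is the shorter route.
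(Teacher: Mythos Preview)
Your plan is workable but takes a genuinely different route from the paper. You propose to work \emph{forward} from the explicit shape of the Lai interval---either in the quadratic form $n(\hat p_n-p)^2\le p(1-p)\,c_n$ or via the relative-entropy description followed by a Pinsker bound---then use $p(1-p)\le 1/4$ to collapse it to a fixed-width interval, and finally verify $c_n\le\log(4n\log n)$ by a term-by-term comparison of logarithms. The paper instead works \emph{backward} from the non-coverage rate: it quotes only $\PP(p\notin I_n)\propto(n\log n)^{-1/2}$ from \cite[equation~(3)]{Lai1976}, encloses $I_n$ in the smallest interval $I_n^s=[\hat p_n-t_n,\hat p_n+t_n]$ symmetric about $\hat p_n$, applies Hoeffding's inequality to $\PP(p\notin I_n^s)=\PP(|\hat p_n-p|>t_n)\le 2\exp(-2nt_n^2)$, and inverts to read off $t_n\le(4n)^{-1/2}\{\log(4n\log n)\}^{1/2}$. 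Thus the paper never needs the closed form of the Lai boundary, only its asymptotic non-coverage rate, whereas your route requires extracting $c_n$ and checking that its leading coefficient does not exceed~$1$---precisely the obstacle you correctly flag. Your argument, once $c_n$ is in hand, is more self-contained and keeps all constants visible; the paper's is shorter and sidesteps the formula for $c_n$ entirely, at the price of relying on the rate statement from \cite{Lai1976} together with the Hoeffding inversion step. The Hoeffding-based alternative you mention only as a fallback is, in fact, essentially the route the paper takes.
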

\begin{proof}
In \cite[equation~(3)]{Lai1976} it is shown that $\PP(p \notin I_n) \propto (n \log n)^{-1/2}$. Since $I_n$ need not be centered around the maximum likelihood estimate $\hat{p}_n$, let $I_n^s \supseteq I_n$ be the smallest symmetric interval around $\hat{p}_n$ containing $I_n$. Denote $I_n^s = [\hat{p}_n-t_n,\hat{p}_n+t_n]$ for some $t_n>0$. Write $\hat{p}_n$ as the average of $n$ independent Bernoulli($p$) random variables. By Hoeffding's inequality, using $\E(\hat{p}_n)=p$, one obtains $\PP(p \notin I_n^s) = \PP(|\hat{p}_n-p|>t_n) \leq 2 \exp(-2 n t_n^2)$. Since $\PP(p \notin I_n^s) \leq \PP(p \notin I_n)$, $\PP(p \notin I_n^s)$ is at least of order $(n \log n)^{-1/2}$. Thus, $t_n \leq (4n)^{-1/2} \{ \log(4n \log n) \}^{1/2}$ for large enough $n$.
\end{proof}

\subsection{Proof of Theorem~\ref{thm:finite_expectation}}
\label{sec:proof}
\begin{proof}
The cdf of $D = \min_{I \in {\cal I}} d(p,\partial I)$ is bounded above by
\begin{align*}
\PP(D \leq t) = \PP(\exists I \in {\cal I}: d(p,\partial I) \leq t) \leq \sum_{I \in {\cal I}} \int_0^t d(p,\partial I) f_p(p) dp \leq (m+1) \int_0^t U dp \leq (m+1)U t,
\end{align*}
where it was used that the cardinality of $\cal I$ is at most $m+1$, that the distance function $d$ is bounded above by $1$, and that the density $f_p$ of the p-values is bounded above by a constant $U$.

In the following, an upper bound on the survival function $\PP(\tau>t)$ will be derived. First, $\PP(|\hat{p}_t-p| \geq D/2) = \PP(|\hat{p}_t-\E(\hat{p}_t)| \geq D/2) \leq 2 \exp(-D^2 t/2)$ by Hoeffding's inequality, where it was used that the maximum likelihood estimate $\hat{p}_t$ can be expressed as an average of $t$ Bernoulli($p$) random variables. Second, using $g(t) \in o(t^\gamma)$ by Condition~\ref{condition:ci}, there exists a $t_0>0$ such that the event $\{ g(t)<D/2 \}$ is implied by $\{ D>t^\gamma \}$ for all $t>t_0$. Indeed, $g(t) \in o(t^\gamma)$ and $D t^{-\gamma}>1$ imply the existence of a $t_0>0$ such that $g(t) < g(t) D t^{-\gamma} < D/2$ for all $t>t_0$.

The survival function $\PP(\tau>t)$ can now be bounded above by conditioning on $D$. For $t>t_0$,
\begin{align*}
\PP(\tau > t) &= \PP(\tau > t|D \leq t^\gamma) \PP(D \leq t^\gamma)
+ \PP(\tau > t|D > t^\gamma) \PP(D > t^\gamma)\\
&\leq (m+1)U t^\gamma + \PP(|\hat{p}_t-p| \geq D/2 \vee g(t) \geq D/2 |D > t^\gamma)\\
&\leq (m+1)U t^\gamma + \PP(|\hat{p}_t-p| \geq D/2 | D > t^\gamma)\\
&\leq (m+1)U t^\gamma + 2 \exp \left( -\frac{1}{2} t^{2\gamma+1} \right),
\end{align*}
where it was used that $\PP(\tau > t|D \leq t^\gamma) \leq 1$, $\PP(D > t^\gamma) \leq 1$ and that $\tau>t$ if either $|\hat{p}_t-p| \geq D/2$ or $g(t) \geq D/2$ by the definition of $\tau$ in Section~\ref{sec:finite}. The latter can be omitted as $\{ D>t^\gamma \}$ implies $\{ g(t)<D/2 \}$ for $t>t_0$. Using $\gamma > -1/2$ by Condition~\ref{condition:ci}, $t^{2\gamma+1} \rightarrow \infty$ in the argument of the exponential function as $t \rightarrow \infty$ and thus $\PP(\tau > t) \in O(t^\gamma)$.

Using the fact that the cumulative distribution function (cdf) of the $r$th order statistic $X_{(r)}$ of $n \in \N$ independent and identically distributed random variables $X_1,\ldots,X_n$ with cdf $F_X$ can be expressed as $F_{X_{(r)}}(x) = \sum_{i=r}^{n} \binom{n}{i} F_X^i(x) \left( 1-F_X(x) \right)^{n-i}$ for $r \in \{ 1,\ldots,n \}$ \citep{DavidNagaraja2003}, the expectation of $\tau_{(m-s)}$ can be bounded as
\begin{align*}
\E(\tau_{(m-s)}) &= \int_0^\infty 1-F_{\tau_{(m-s)}}(t) dt\\
&= \int_0^\infty \sum_{i=0}^{m-s-1} \binom{m}{i} F_\tau^i(t) \left( 1-F_\tau(t) \right)^{m-i} dt
\leq \sum_{i=0}^{m-3} \binom{m}{i} \int_0^\infty \PP(\tau>t)^{m-i} dt,
\end{align*}
where it was used that the stopping times are non-negative, that $F_\tau(t) \leq 1$ and that $2 \leq s < m$ implies $m-s-1 \leq m-3$. Using $\PP(\tau > t) \in O(t^\gamma)$, the integrals $\int_0^\infty \PP(\tau>t)^{m-i} dt$ behave like $\int_0^\infty t^{\gamma(m-i)} dt$ and converge as $\gamma < -1/3$ (cf.\ Condition~\ref{condition:ci}) and $m \geq 3$ imply $\gamma(m-i) < -1$ for all $i \in \{0,\ldots,m-3 \}$.
\end{proof}

% \bibliographystyle{apalike}
% \bibliography{document}

\end{document}